\documentclass{article}
\usepackage{amsmath,amsfonts,amsthm,amssymb,amscd,color,xcolor,mathrsfs, mathtools,mathrsfs,enumitem}
\usepackage{hyperref}

\colorlet{darkblue}{blue!50!black}
\colorlet{darkmagenta}{magenta!80!black}

\hypersetup{
    colorlinks,%
    citecolor=darkblue,%
    filecolor=red,%
    linkcolor=darkblue,%
    urlcolor=blue,%
    pdfnewwindow=true,%
    pdfstartview={FitH}
}

\binoppenalty=9999 
\relpenalty=9999

\newcommand{\p}{\partial}

\newcommand{\bS}{{\mathbb S}}
\newcommand{\R}{{\mathbb R}}
\newcommand{\Z}{{\mathbb Z}}

\newcommand{\E}{{\mathbb E}}

\newcommand{\XX}{{\cal X}}

\newcommand{\dd}{{\textup d}}

\theoremstyle{plain}

\newtheorem{theorem}{Theorem}[section]

\newtheorem{proposition}[theorem]{Proposition}
\newtheorem{corollary}[theorem]{Corollary}
\theoremstyle{definition}

\theoremstyle{remark}
\newtheorem{remark}[theorem]{Remark}

\numberwithin{equation}{section}

\begin{document}
\title{Exponential stability of the flow for a generalised Burgers equation on a circle}
\author{Ana Djurdjevac\footnote{Freie Universit\"at Berlin, Arnimallee 6, 14195 Berlin, Germany; email: \href{mailto:adjurdjevac@zedat.fu-berlin.de}{adjurdjevac@zedat.fu-berlin.de}.} \and Armen Shirikyan\footnote{Department of Mathematics, CY Cergy Paris University, CNRS UMR 8088, 2 avenue Adolphe Chauvin, 95302 Cergy--Pontoise, France \& Peoples Friendship University of Russia (RUDN University); email: \href{mailto:Armen.Shirikyan@cyu.fr}{Armen.Shirikyan@cyu.fr}.}}
\date{\today}
\maketitle

\begin{abstract}
The paper deals with the problem of stability for the flow of the 1D Burgers equation on a circle. Using some ideas from the theory of positivity preserving semigroups, we establish the strong contraction in the $L^1$ norm. As a consequence, it is proved that the equation with a bounded external force possesses a unique bounded solution on~$\R$, which is exponentially stable in~$H^1$ as $t\to+\infty$. In the case of a random  external force, we show that the difference between two trajectories goes to zero with probability~$1$. 

\medskip
\noindent
{\bf AMS subject classifications:} 35B35,  35K58, 60G10, 93D23

\smallskip
\noindent
{\bf Keywords:} Burgers equation, exponential stability, bounded trajectory
\end{abstract}
\tableofcontents

\section{Introduction}
Let us consider the generalised viscous Burgers equation
\begin{equation} \label{burgers}
	\p_tu-\nu\p_x^2u+\p_x f(u)=h(t,x), \quad x\in\bS. 
\end{equation}
Here $\bS=\R/\Z$ is a circle of unit length, so that all the functions are assumed to be $1$-periodic in~$x$, $\nu>0$ is a fixed parameter, $h$ is an external force, and $f\in C^2(\R)$ is a given function (called {\it flux\/}) whose second derivative satisfies the inequality\,\footnote{Inequality~\eqref{lower-bound} on the flux is not needed if the external force is a bounded function of time. In particular, Theorems~\ref{t-MT} and~\ref{t-attractor} are valid for any $C^2$-smooth flux~$f$.}
\begin{equation}\label{lower-bound}
f''(u)\ge \sigma\quad\mbox{for all $u\in\R$},
\end{equation}
where~$\sigma>0$ is a number. The function~$h$ is assumed to be locally bounded on~$\R_+\times\bS$, which ensures the well-posedness of the Cauchy problem for~\eqref{burgers}. Namely, for any initial condition~$u_0$ in the Sobolev class~$H^1(\bS)$, Eq.~\eqref{burgers} has a unique solution $u(t,x)$ in an appropriate function space such that 
\begin{equation}\label{IC}
	u(0,x)=u_0(x);
\end{equation}
see Section~\ref{s-CP} for an exact statement. To simplify the presentation, we assume in this introduction that all the functions defined on~$\bS$ have {\it zero mean value\/}. 

A simple consequence of the maximum principle is the non-expansion of the $L^1$ norm of the difference between two solutions of~\eqref{burgers}. Combining this with the regularising property of the resolving operator, it is easy to prove that the flow generated by~\eqref{burgers}, \eqref{IC} is $H^1$-stable in the following sense: if a sequence of initial conditions $(u_0^n)$ converges to~$u_0$ in~$H^1$, then the corresponding solutions~$(u^n)$ and~$u$ are such that 
$$
\sup_{t\ge0}\|u^n(t)-u(t)\|_{H^1}\to0\quad\mbox{as $n\to\infty$}. 
$$
The goal of this paper is to prove that the flow is in fact exponentially asymptotically stable as $t\to\infty$. We shall show, in particular, that the two properties below are true (see Section~\ref{s-applications} for more details):
\begin{description}
	\item [Global attractor:] \sl If $h$ is a bounded function of $t\in\R$ with range in~$H^1$, then there is a unique $H^1$-bounded solution of~\eqref{burgers} defined throughout the real line, and any other solution converges to it in~$H^1$ exponentially fast as $t\to+\infty$. 
	\item [Stochastic equation:] If $h$ is a pathwise continuous $H^2$-valued stochastic process (not necessarily bounded in time, but satisfying a mild growth condition),  then the $L^1$ norm of the difference between two solutions of Eq.~\eqref{burgers} converges to zero with probability~$1$ as $t\to+\infty$. 
\end{description}
The proof of both results is based on a squeezing property of the flow established in Section~\ref{s-MR}. The latter uses an idea coming from the theory of positive operators and applied in~\cite{shirikyan-jep2017} in the case of the Dirichlet  boundary condition. 

Let us mention that various results on the stability of the Burgers flow were obtained earlier in both deterministic and stochastic settings, for the  periodic and Dirichlet boundary conditions, as well as for the whole space. One of the first mathematically rigorous results was obtained by Sinai~\cite{sinai-1991}. He studied the cases when~$h$ is a time periodic deterministic function or spatially smooth white noise and proved the almost sure convergence of trajectories with random initial data to a limiting periodic solution or a measure, both of them independent from the initial condition. Kifer~\cite{kifer-1997} obtained a similar result for the Burgers equation in~$\R^d$ under the hypothesis that both the solution and external force are potential fields. Hill and~S\"uli~\cite{HS-1995} studied viscous scalar conservation laws in a bounded domain of~$\R^d$ with a time-independent external force and proved the existence, uniqueness, and exponential stability of a stationary solution. Jauslin--Kreiss--Moser~\cite{JKM-1999} studied the Burgers equation on~$\bS$ with a time-periodic external force and proved the existence and asymptotic stability of a time-periodic solution. Boritchev~\cite{boritchev-2013} investigated generalised Burgers equation on~$\bS$ and assuming that~$h$ is a spatially smooth white noise proved that there is a unique stationary measure and that any other solution converges to it with a rate independent of the viscosity~$\nu>0$. Chung and Kwon~\cite{CK-2016} considered the Burgers equation on~$\R$ with a quadratic flux and a non-negative time independent external force~$h$ and proved the existence and uniqueness of a non-negative stationary solution and convergence to it with an algebraic rate. Kalita and Zgliczy\'nski~\cite{KZ-2020} studied the same equation with periodic and Dirichlet boundary conditions and established the convergence to a limiting trajectory. They also proved the exponential rate of convergence in the case of the Dirichlet boundary condition. Bakhtin--Li~\cite{BL-2019} and Dunlap--Graham--Ryzhik~\cite{DGR-2021} constructed space-time stationary solutions for the stochastic Burgers equation on~$\R$ and proved their stability as $t\to+\infty$. Finally, the recent article~\cite{DR-2022} proves exponential synchronisation of a scalar viscous conservation law in a bounded domain with the Dirichlet boundary condition, in both deterministic and stochastic settings. 

The present article establishes the exponential rate of convergence in the periodic setting and simplifies many of the proofs obtained earlier. Let us also note that the results of this paper can be extended to viscous scalar conservation laws in higher dimension. However, since the corresponding proofs do not involve any new ideas and can be carried out using the methods of the articles\,\footnote{The preprint~\cite{DS-2022} was withdrawn from publication, since the main result obtained there is essentially covered by the papers~\cite{HS-1995,DR-2022}.}~\cite{DS-2022,DR-2022}, we do not discuss them in this paper.

\smallskip
The paper is organised as follows. Section~\ref{s-preliminaries} gathers various results on the Cauchy problem for~\eqref{burgers} and on linear parabolic equations. In Section~\ref{s-MR}, we formulate and prove the main theorem of this paper on the strong $L^1$-contraction of the flow of the Burgers equation. Finally, in Section~\ref{s-applications}, we present exact formulation of the two properties mentioned above and give their proofs. 

\subsubsection*{Acknowledgments}
The research of the first author has been partially supported by Deutsche Forschungsgemeinschaft (DFG) through grant CRC 1114 {\it Scaling Cascades in Complex Systems\/}, Project Number 235221301, Project C10 -- Numerical analysis for nonlinear SPDE models of particle systems. The research of the second author has been supported by the \textit{CY Initiative\/} through the grant {\it Investissements d'Avenir\/} ANR-16-IDEX-0008 and by the Ministry of Science and Higher Education of the Russian Federation (megagrant agreement No.~075-15-2022-1115).

\subsubsection*{Notation and conventions}
We write $\bS=\R/\Z$ and identify any function on~$\bS$ with a $1$-periodic function on~$\R$. The usual Lebesgue and Sobolev spaces on a domain~$D$ are denoted by~$L^p(D)$ and~$H^s(D)$, and we write $|\cdot|_p$ and~$\|\cdot\|_s$ for the corresponding norms and~$(\cdot,\cdot)$ for the $L^2$ inner product. In the case when $D=\bS$, we often drop~$\bS$ and write simply~$L^p$ and~$H^s$. Given a function $g\in L^1(\bS)$, we set 
$$
\langle g\rangle=\int_\bS g(x)\dd x. 
$$
For a closed (not necessarily bounded) interval $J\subset\R$ and a separable Banach space~$X$, we denote by~$C_b(J,X)$ the space of bounded continuous functions $f:J\to X$ endowed with the natural norm and by $L_{\rm{loc}}^2(J,X)$ the space of Borel measurable functions $f:J\to X$ such that, for any bounded interval $I\subset J$,
$$
\|f\|_{L^2(I,X)}^2:=\int_I\|f(t)\|_X^2\dd t<\infty. 
$$
Let $\XX(J)$ be the space of functions $f\in L_{\rm{loc}}^2(J,H^2)$ such that $\p_tf\in L_{\rm{loc}}^2(J,L^2)$. In the case when~$J$ is bounded, $\XX(J)$ is a Hilbert space with a norm $\|\cdot\|_\XX$ defined by 
$$
\|f\|_\XX^2=\int_J\bigl(\|f(t)\|_2^2+|\p_tf(t)|_2^2\bigr)\,\dd t.
$$
If $J=J_T:=[0,T]$, then we write $\XX_T$ instead of~$\XX(J_T)$, and if $J=\R_+$, then we write~$\XX$. Finally, for a Banach space~$X$, we denote $B_X(R)$ the closed ball in~$X$ of radius~$R$ centred at zero. 

\section{Preliminaries}
\label{s-preliminaries}

\subsection{Cauchy problem} 
\label{s-CP}
Let us fix $\nu>0$ and~$T>0$, and consider Eq.~\eqref{burgers} in the domain $Q_T:=J_T\times\bS$. The following result is well known and can be established by standard methods based on a priori estimates and a fixed point argument; see~\cite{lions1969,evans2010}. 

\begin{theorem}\label{t-WP}
	Let $f\in C^2(\R)$ be an arbitrary function, let $u_0\in H^1(\bS)$, and let $h\in L^\infty(Q_T)$. Then problem~\eqref{burgers}, \eqref{IC} has a unique solution $u\in\XX_T$, and there is a number $K_T>0$ depending on~$\nu$, $f$, $|u_0|_{L^\infty}$, and $|h|_{L^\infty(Q_T)}$ such that\,\footnote{Note that we did not impose any condition on the mean value of~$h$ with respect to~$x$, so that the norm of solutions may indeed grow with~$T$.}
	\begin{align}
	|u|_{L^\infty(Q_T)}&\le |u_0|_{L^\infty(\bS)}+T\,|h|_{L^\infty(Q_T)}	,
		\label{apriori-Linfty}\\
	\|u\|_{\XX_T}&\le K_T\bigl(\|u_0\|_1+|h|_{L^2(Q_T)}\bigr).\label{apriori-sobolev}
	\end{align}
\end{theorem}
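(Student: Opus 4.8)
The plan is to follow the classical three-step scheme---a priori estimates, construction by approximation, and uniqueness---paying particular attention to the convective term $\p_x f(u)$, the only genuinely nonlinear ingredient. I would establish the $L^\infty$ bound \eqref{apriori-Linfty} first, since it is what keeps the nonlinearity under control in every subsequent estimate. The key observation is that on the boundaryless circle the flux term cancels when the equation is tested against $|u|^{p-2}u$: writing $\Psi$ for a primitive of $s\mapsto f'(s)\,|s|^{p-2}s$, one has $\int_\bS\p_x f(u)\,|u|^{p-2}u\,\dd x=\int_\bS\p_x\Psi(u)\,\dd x=0$. Multiplying \eqref{burgers} by $|u|^{p-2}u$, integrating over $\bS$, and discarding the nonnegative viscous contribution yields $\tfrac{\dd}{\dd t}|u|_p\le|h(t)|_p$, whence $|u(t)|_p\le|u_0|_p+\int_0^t|h(s)|_p\,\dd s$; letting $p\to\infty$ (and using $|\bS|=1$) gives \eqref{apriori-Linfty}.

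For the $\XX_T$ estimate \eqref{apriori-sobolev} I would carry out two energy estimates. Testing against $u$ and using the same cancellation $\int_\bS u\,\p_x f(u)\,\dd x=0$ gives $\tfrac12\tfrac{\dd}{\dd t}|u|_2^2+\nu|\p_x u|_2^2=(h,u)$, which by Young's inequality and Gronwall's lemma controls $|u(t)|_2$ and the $L^2(J_T,H^1)$ norm. Testing against $-\p_x^2u$ gives $\tfrac12\tfrac{\dd}{\dd t}|\p_x u|_2^2+\nu|\p_x^2u|_2^2=\int_\bS f'(u)\,\p_x u\,\p_x^2u\,\dd x-\int_\bS h\,\p_x^2u\,\dd x$; here I would bound $\|f'(u)\|_{L^\infty}$ by the $L^\infty$ bound already in hand and absorb both right-hand terms into $\nu|\p_x^2u|_2^2$ via Cauchy--Schwarz and Young. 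A second Gronwall argument then yields $H^1$ control of $u(t)$ together with an $L^2(J_T,H^2)$ bound, and reading $\p_t u=\nu\p_x^2u-\p_x f(u)+h$ off the equation bounds $\p_t u$ in $L^2(Q_T)$, which completes \eqref{apriori-sobolev}.

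Existence I would obtain by a Galerkin scheme on the Fourier basis of $\bS$: the truncated equations form a locally Lipschitz ODE system, and since the estimates above hold uniformly in the truncation dimension, the approximations exist on all of $J_T$ and are bounded in $\XX_T$. By the Aubin--Lions lemma they are relatively compact in $L^2(J_T,H^1)$, hence converge along a subsequence strongly there and a.e.\ on $Q_T$, which is exactly what is needed to pass to the limit in the nonlinear term and to identify the limit as a solution in $\XX_T$. (A mild-formulation contraction argument using the heat semigroup, smoothing the $\p_x$ in $\p_x f(u)$, is an equivalent alternative.) Uniqueness is then straightforward: for two solutions the difference $w=u_1-u_2$ solves a linear parabolic equation with $f(u_1)-f(u_2)=g\,w$, where $g=\int_0^1 f'(u_2+\theta w)\,\dd\theta$ is bounded by \eqref{apriori-Linfty}; testing against $w$ gives $\tfrac{\dd}{\dd t}|w|_2^2\le C|w|_2^2$, and $w(0)=0$ forces $w\equiv0$.

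The main obstacle is the nonlinear convective term $\p_x f(u)$. In the energy estimates it can be controlled only once the $L^\infty$ bound is available, so that $f'(u)$ is uniformly bounded along the solution; this is precisely why \eqref{apriori-Linfty} has to be proved first and independently of the Sobolev estimates. In the construction step it is the term for which passage to the limit fails under merely weak convergence, making the compactness argument via Aubin--Lions (or strong convergence from a contraction) indispensable. Throughout, the identity $\int_\bS\p_x[\,\emph{function of }u\,]\,\dd x=0$, valid exactly because $\bS$ has no boundary, is what renders both the $L^\infty$ and the basic $L^2$ estimates clean.
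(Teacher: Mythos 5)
Your a priori estimates are sound, and for \eqref{apriori-sobolev} they coincide with the paper's own derivation: test with $u$, then with $-\p_x^2u$, use the $L^\infty$ bound to dominate $f'(u)$, close with Gronwall, and read $\p_tu$ off the equation. For \eqref{apriori-Linfty} you take a genuinely different route: the paper regards $u$ as a solution of the \emph{linear} equation $\p_tu-\nu\p_x^2u+b(t,x)\p_xu=h$ with $b=f'(u)$ and invokes the parabolic maximum principle, whereas you run the $L^p$ energy estimate, using the cancellation $\int_\bS\p_x\Psi(u)\,\dd x=0$ on the circle, and let $p\to\infty$. Both are valid a priori arguments applied to the solution itself; yours is more self-contained (no appeal to the maximum principle for equations with bounded drift), at the cost of the routine technicalities of dividing by $|u|_p^{p-1}$ and justifying the limit $p\to\infty$.

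The one genuine weak point is the existence step. Your claim that ``the estimates above hold uniformly in the truncation dimension'' fails for precisely the estimate that controls the nonlinearity: the $L^p$/$L^\infty$ bound. The test function $|u_n|^{p-2}u_n$ does not lie in the span of the first $n$ Fourier modes, so it is not admissible for the projected system, and no maximum principle applies to the projected equation either, since the Galerkin projection destroys pointwise sign arguments. Consequently $|f'(u_n)|_{L^\infty(Q_T)}$ is not under control for an arbitrary $C^2$ flux with no growth restriction, and your $H^1$ Galerkin estimate --- which needs exactly this quantity --- does not close. The standard repairs are: (i) truncate the flux, replacing $f$ by $f_R\in C^2$ with $f_R=f$ on $[-R,R]$ and $f_R'$, $f_R''$ globally bounded; for such a flux the Galerkin estimates do close, one passes to the limit by Aubin--Lions as you describe, and the limiting solution of the truncated problem satisfies the structural bound \eqref{apriori-Linfty}, which is independent of $R$, so that choosing $R>|u_0|_{L^\infty}+T|h|_{L^\infty(Q_T)}$ yields a solution of the original equation; or (ii) the mild-formulation contraction you mention parenthetically, which produces a local solution in $C([0,\delta],H^1)$ (local Lipschitz continuity of $f$ and $f'$ suffices) and is continued globally using the a priori bounds. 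Either repair is routine, and the paper itself delegates this step to the references; but as literally written, your Galerkin argument would fail at the uniformity claim.
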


\begin{proof}
	For the reader's convenience, we sketch the derivation of the a priori bounds. To prove~\eqref{apriori-Linfty}, we note that the function~$u(t,x)$ can be regarded as a solution of the linear equation
$$
\p_tu-\nu\p_x^2u+b(t,x)\p_xu=h(t,x),
$$
where $b(t,x)=f'(u(t,x))$. Applying the maximum principle to this equation (see Section~3.1 in~\cite{landis1998}), we obtain~\eqref{apriori-Linfty}. 
	
	To prove~\eqref{apriori-sobolev}, we first take the $L^2$ inner product of~\eqref{burgers} with~$2u$. Using the periodicity condition and the Cauchy--Schwarz inequality, we derive 
\begin{equation}\label{L2-estimate}
	\p_t|u|_2^2+2\nu\,|\p_xu|_2^2=2(h,u)\le |h|_2^2+|u|_2^2.
\end{equation}
By the Gronwall inequality, it follows that 
	$$
	\sup_{0\le t\le T}\biggl(|u(t)|_2^2+2\nu\int_0^t|\p_xu(s)|_2^2\dd s\biggr)
	\le C(T)\bigl(|u_0|_2^2+|h|_{L^2(Q_T)}^2\bigr),
	$$
	where $C(T)>0$ depends only on~$T$. To obtain bounds on higher Sobolev norms, we take the~$L^2$ inner product of~\eqref{burgers} with~$-2\p_x^2u$. After some simple transformations, we obtain
$$
\p_t|\p_xu|_2^2+2\nu\,|\p_x^2u|_2^2\le 2|h|_2|\p_xu|_2+K_1(T)|\p_xu|_2|\p_x^2u|_2\,;
$$
here and henceforth we denote by $K_i(T)$ some numbers that can be expressed in terms of the quantity $|f'(u)|_{L^\infty(Q_T)}$ (which is finite in view of~\eqref{apriori-Linfty}). Application of the Cauchy inequality results in 
$$
\p_t|\p_xu|_2^2+\nu\,|\p_x^2u|_2^2\le K_2(T)|\p_xu|_2^2+\nu^{-1}|h|_2^2. 
$$
Using again the Gronwall inequality, 	we obtain
	\begin{equation}\label{estimate-ptu}
	\sup_{0\le t\le T}\biggl(|\p_xu(t)|_2^2+\nu\int_0^t|\p_x^2u(s)|_2^2\dd s\biggr)
	\le K_3(T)\bigl(\|u_0\|_1^2+|h|_{L^2(Q_T)}^2\bigr). 	
	\end{equation}
Finally, resolving~\eqref{burgers} with respect to~$\p_tu$ and taking the $L^2$ norm over~$Q_T$, we derive
$$
|\p_tu|_2\le |h|_2+K_4(T)\,|\p_xu|_2+\nu\,|\p_x^2u|_2. 
$$
Using~\eqref{estimate-ptu}, we conclude that~$|\p_tu|_2$ can also be estimated by the right-hand side of~\eqref{apriori-sobolev}. This completes the proof of the theorem.
\end{proof}

\subsection{Dissipativity estimates}
In this section, we assume that $h:\R_+\times\bS\to\R$ is a locally integrable function such that 
\begin{equation}\label{mean-value}
\langle h(t,\cdot)\rangle=0 
\end{equation}
for almost every $t\in\R$. Let us set $Q=\R_+\times\bS$. Our first result concerns the dissipativity of the dynamics for~\eqref{burgers}. 

\begin{theorem}\label{t-dissipation}
In addition to the above hypothesis, assume that $h\in L^\infty(Q)$. Then, for any $c\in\R$, there exists a number~$C>0$ depending also on~$\nu$, $f$, and~$|h|_{L^\infty(Q)}$ such that, for any initial condition $u_0\in H^1(\bS)$ with $\langle u_0\rangle=c$, the solution $u\in\XX$ of problem~\eqref{burgers}, \eqref{IC} satisfies the following inequality for $t\ge C\ln(|u_0|_2+2)${\rm:} 
\begin{equation}\label{uniform-bound}
	\|u(t)\|_1^2+\int_t^{t+1}\bigl(\|u(s)\|_2^2+|\p_su(s)|_2^2\bigr)\dd s\le C. 
\end{equation}
\end{theorem}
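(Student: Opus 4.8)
The plan is to combine a standard $L^2$ energy estimate, which produces an absorbing ball in $L^2$, with a bootstrap to higher regularity that avoids any direct $H^1$-level energy identity. Throughout I write $u=c+v$, where $v=u-c$ has zero mean, and I use that the spatial mean is conserved: integrating~\eqref{burgers} over $\bS$ and invoking~\eqref{mean-value} gives $\p_t\langle u\rangle=0$, so $\langle u(t)\rangle=c$ for all $t$.

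First I would derive the $L^2$ estimate. Taking the $L^2$ inner product of~\eqref{burgers} with $2u$, the flux term drops out by periodicity, since $(\p_xf(u),u)=-\int_\bS f(u)\,\p_xu\,\dd x=-\int_\bS\p_x\bigl(F(u)\bigr)\,\dd x=0$ for an antiderivative $F$ of $f$; this yields $\p_t|u|_2^2+2\nu|\p_xu|_2^2=2(h,u)$. Because $\langle h\rangle=0$ I may replace $u$ by $v$ on the right-hand side, and the Poincaré inequality $|v|_2\le(2\pi)^{-1}|\p_xv|_2$ for zero-mean functions lets me absorb the forcing and produce a differential inequality of the form $\p_t|v|_2^2+\alpha|v|_2^2\le\beta$ with $\alpha\sim\nu$ and $\beta\sim\nu^{-1}|h|_{L^\infty(Q)}^2$. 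Gronwall's lemma gives $|v(t)|_2^2\le e^{-\alpha t}|v(0)|_2^2+\beta/\alpha$, and since $|v(0)|_2\le|u_0|_2$ there is a constant so that $|u(t)|_2^2=c^2+|v(t)|_2^2$ is bounded by a fixed $R_0$ once $t\ge t_0:=C\ln(|u_0|_2+2)$. Integrating the same inequality over $[t,t+1]$ additionally gives $\int_t^{t+1}|\p_xu|_2^2\,\dd s\le R_2$, hence $\int_t^{t+1}\|u\|_1^2\,\dd s\le R_2'$, uniformly for $t\ge t_0$.

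The key step, and the main obstacle, is to upgrade these time-averaged bounds to the pointwise-in-time control of $\|u(t)\|_1$ demanded by~\eqref{uniform-bound}. A direct $H^1$ estimate runs into the cubic term $\int_\bS f''(u)(\p_xu)^3\,\dd x$, whose coefficient $|f''(u)|_{L^\infty}$ is controlled only through $|u|_{L^\infty}\le C\|u\|_1$, precisely the quantity one is trying to bound; the resulting differential inequality is genuinely superlinear and the uniform Gronwall lemma does not close. I would sidestep this by a local bootstrap. For $t\ge t_0+1$, the bound $\int_{t-1}^t\|u\|_1^2\,\dd s\le R_2'$ forces the existence of a time $\tau^*\in[t-1,t]$ with $\|u(\tau^*)\|_1^2\le R_2'$; since $\XX\hookrightarrow C([0,\infty),H^1)$ this is a genuine pointwise bound. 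Crucially, the embedding $H^1(\bS)\hookrightarrow L^\infty(\bS)$ turns it into a \emph{uniform} bound $|u(\tau^*)|_{L^\infty}\le C_S\sqrt{R_2'}$ independent of $u_0$ and of $t$.

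Finally I would apply Theorem~\ref{t-WP} on the interval $[\tau^*,\tau^*+2]\supseteq[t,t+1]$ with initial datum $u(\tau^*)$. Because the well-posedness constant there depends on the initial condition only through $|u(\tau^*)|_{L^\infty}$, which has just been bounded uniformly, estimates~\eqref{apriori-Linfty}--\eqref{apriori-sobolev}, together with $\XX_{[\tau^*,\tau^*+2]}\hookrightarrow C([\tau^*,\tau^*+2],H^1)$, yield uniform bounds on $\sup_{[\tau^*,\tau^*+2]}\|u\|_1$ and on $\int_{\tau^*}^{\tau^*+2}\bigl(\|u\|_2^2+|\p_su|_2^2\bigr)\,\dd s$. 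Restricting to $t\in[\tau^*,\tau^*+2]$ and to $[t,t+1]\subset[\tau^*,\tau^*+2]$ gives exactly~\eqref{uniform-bound}, after enlarging $C$ so that the threshold $t_0+1$ is again of the form $C\ln(|u_0|_2+2)$. Note that this argument uses only $h\in L^\infty(Q)$ and $\langle h\rangle=0$, consistent with the remark that the convexity~\eqref{lower-bound} of the flux is not needed when the force is bounded in time.
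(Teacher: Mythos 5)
Your proposal is correct and follows essentially the same route as the paper's proof: reduce to zero mean, use the $L^2$ energy estimate with Poincar\'e and Gronwall to get decay plus an integrated bound on $\int_t^{t+1}|\p_xu|_2^2\,\dd s$, pick a good time $\tau^*\in[t-1,t]$ where $\|u(\tau^*)\|_1$ is bounded, and then invoke Theorem~\ref{t-WP} on $[\tau^*,\tau^*+2]$ together with the embedding $\XX_1\hookrightarrow C(J_1,H^1)$. Your additional remarks (why the direct $H^1$ estimate fails, and that the constant in~\eqref{apriori-sobolev} depends on the initial datum only through $|u(\tau^*)|_{L^\infty}$, controlled by Sobolev embedding) just make explicit what the paper leaves implicit.
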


\begin{proof}
Let us note that integrating~\eqref{burgers} in $t\in J_T$ and $x\in\bS$, we obtain $\langle u(T)\rangle=\langle u_0\rangle=c$ for any $T\ge0$. Substituting $u=c+v$ in~\eqref{burgers}, we obtain for~$v$ an equation of exactly the same form as for~$u$, with the function~$f$ replaced by its translation. Therefore we can assume from the very beginning that $\langle u_0\rangle =0$ and consider solutions with zero mean value. Moreover, since~$\XX_1$ is continuously embedded into~$C(J_1,H^1)$ (see Theorem~3.1 in \cite[Chapter I]{LM1972}), it suffices to estimate the integral in the left-hand side of~\eqref{uniform-bound}. 

To this end, we note that the inner product in relation~\eqref{L2-estimate} does not exceed $\nu|\p_xu|_2^2+C_1\nu^{-1}|h|_2^2$, where~$C_i$ stand for some unessential numbers not depending on the solution.  Combining this with~\eqref{L2-estimate}, we derive
\begin{equation} \label{pre-gronwall}
\p_t|u|_2^2+\nu\,|\p_xu|_2^2\le C_1\nu^{-1}|h|_2^2.
\end{equation}
Recalling that~$u$ has zero mean value in~$x$, applying the Poincar\'e inequality to~$|\p_xu|_2^2$, and using the Gronwall inequality, we derive 
$$
|u(t)|_2^2\le e^{-\gamma t}|u_0|_2^2+C_2\sup_{s\ge 0}|h(s)|_2^2. 
$$
Combining this with~\eqref{pre-gronwall}, we see that 
$$
	\int_t^{t+1}|\p_x u(s)|_2^2\,\dd s\le C_3\Bigl(e^{-\gamma t}|u_0|_2^2+\sup_{s\ge 0}|h(s)|_2^2\Bigr). 
$$
In particular, we can find a number $C_4>0$ such that 
$$
\int_t^{t+1}|\p_x u(s)|_2^2\,\dd s\le C_4\quad 
\mbox{for $t\ge T:=C_4\ln(|u_0|_2+2)$}. 
$$
Since $u$ is a continuous function of time with range in~$H^1$, for any $t\ge T+1$ we can find $t_0\in[t-1,t]$ such that $\|u(t_0)\|_1\le C_5$. Applying~\eqref{apriori-sobolev} to the Cauchy problem for~\eqref{burgers} studied on the interval $[t_0,t_0+2]$, we see that the integral in~\eqref{uniform-bound} is bounded by an absolute constant~$C$. This completes the proof of the theorem. 
\end{proof}

The next result is useful when the right-hand side of~\eqref{burgers} is unbounded. It is related to the strong nonlinear dissipation of the Burgers equation with a strictly convex flux. 

\begin{theorem}\label{t-damping}
	In addition to the hypotheses of Theorem~\ref{t-dissipation}, let us assume that $h\in C_b(\R_+,H^2)$ and~$f$ satisfies~\eqref{lower-bound}. Then, for any $c\in\R$, there is a number~$C>0$ depending also on~$\nu$, $f$, and~$|h|_{L^\infty(\R_+,H^2)}$ such that, for any initial condition $u_0\in H^1$ with $\langle u_0\rangle=c$, the solution $u\in\XX$ of problem~\eqref{burgers}, \eqref{IC} satisfies inequality~\eqref{uniform-bound} for $t\ge1$.  
\end{theorem}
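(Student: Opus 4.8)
The only new feature compared with Theorem~\ref{t-dissipation} is that the instant~$1$ after which~\eqref{uniform-bound} holds is independent of~$u_0$; this reflects the strong nonlinear dissipation produced by the convexity assumption~\eqref{lower-bound}. The plan is to establish an Oleinik-type one-sided bound on~$\p_xu$ that is uniform in the initial data, to turn it into a uniform $L^\infty$ bound on~$u$, and then to reduce the remaining estimates to the argument already used for Theorem~\ref{t-dissipation}. As there, it suffices to treat the case $c=0$, so that $\langle u(t)\rangle=0$ for all~$t\ge0$; recall also that $\langle\p_xu(t)\rangle=0$ by periodicity.

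\textbf{One-sided gradient bound.} By parabolic smoothing, $u$ is smooth for $t>0$, so we may differentiate~\eqref{burgers} in~$x$. Writing $w=\p_xu$, we get
\begin{equation}\label{w-eq}
\p_tw-\nu\p_x^2w+f'(u)\,\p_xw+f''(u)\,w^2=\p_xh.
\end{equation}
Since $H^2(\bS)\hookrightarrow C^1(\bS)$, there is a number $H>0$, depending only on $|h|_{L^\infty(\R_+,H^2)}$, such that $|\p_xh(t,\cdot)|_{L^\infty}\le H$ for all $t\ge0$. Using $f''\ge\sigma$ and $w^2\ge0$ in~\eqref{w-eq}, we obtain the differential inequality $\p_tw-\nu\p_x^2w+f'(u)\,\p_xw+\sigma w^2\le\p_xh\le H$. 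I would then compare~$w$ with the spatially constant supersolution
$$
\Phi(t)=\frac{1}{\sigma t}+\sqrt{H/\sigma},
$$
which satisfies $\dot\Phi+\sigma\Phi^2\ge H$ and blows up as $t\to0^+$. A standard maximum-principle argument applied to $w-\Phi$ (which tends to~$-\infty$ as $t\to0^+$ and can only decrease at a positive spatial maximum, since there $w+\Phi>0$) yields $w(t,x)\le\Phi(t)$ for all $t>0$. In particular,
$$
\p_xu(t,x)\le C_1:=\frac2\sigma+\sqrt{H/\sigma}\quad\mbox{for all $t\ge\tfrac12$, $x\in\bS$}.
$$

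\textbf{From the gradient bound to~\eqref{uniform-bound}.} Because $\langle\p_xu\rangle=0$, the positive and negative parts of~$\p_xu$ have equal integrals over~$\bS$, so the one-sided bound gives $|\p_xu(t)|_1\le2C_1$ for $t\ge\frac12$. On the circle of unit length a zero-mean function whose derivative is bounded in~$L^1$ has oscillation at most~$\frac12|\p_xu|_1$; hence $|u(t)|_{L^\infty}\le C_1$, and in particular $|u(t)|_2\le C_1$, for $t\ge\frac12$, with~$C_1$ independent of~$u_0$. From this point on the argument repeats that of Theorem~\ref{t-dissipation}: integrating~\eqref{pre-gronwall} over $[t,t+1]$ and using the $L^2$ bound just obtained, I would get $\int_t^{t+1}|\p_xu|_2^2\,\dd s\le C_2$ for $t\ge\frac12$, select $t_0\in[\frac12,1]$ with $\|u(t_0)\|_1\le C_3$, and finally apply~\eqref{apriori-sobolev} on $[t_0,t_0+2]$ together with the embedding $\XX_1\hookrightarrow C(J_1,H^1)$ to deduce~\eqref{uniform-bound} for all $t\ge1$, with a constant depending only on $\nu$, $f$, and $|h|_{L^\infty(\R_+,H^2)}$.

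\textbf{Main obstacle.} The delicate step is the rigorous justification of the comparison $w\le\Phi$. It requires the regularity of~$u$ for positive times---which follows from parabolic smoothing but should be stated precisely---and a careful handling of the initialisation as $t\to0^+$, where both $w$ and~$\Phi$ are unbounded; here one uses that short-time parabolic smoothing makes $\p_xu(t,\cdot)$ grow strictly slower than $\Phi(t)\sim(\sigma t)^{-1}$, so that $w-\Phi\to-\infty$. Once the one-sided bound is secured, the passage to~\eqref{uniform-bound} is routine.
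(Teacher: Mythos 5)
Your argument is correct in outline, and it rests on exactly the same mechanism as the paper's proof: the convexity hypothesis~\eqref{lower-bound} yields an $L^\infty$ bound on $u(t)$, valid for $t\ge\tfrac12$ and uniform with respect to the initial condition, after which one reruns the argument of Theorem~\ref{t-dissipation}. The difference is in how that bound is obtained. The paper simply cites the Kruzhkov maximum principle (Kru\v{z}kov's paper, or Section~4.1 of Boritchev), which in this periodic, strictly convex setting is precisely the Oleinik-type one-sided estimate $\p_xu(t,x)\le(\sigma t)^{-1}+\sqrt{H/\sigma}$ (with $H:=\sup_{t\ge0}|\p_xh(t,\cdot)|_{L^\infty}$) that you derive; your proposal unpacks this black box by an explicit supersolution argument and then passes from the one-sided gradient bound to $|u(t)|_{L^\infty}\le C_1$ via the zero mean value, just as in the cited references. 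What your route buys is a self-contained proof with explicit constants; what the paper's buys is brevity. (A further, minor divergence: the paper obtains the intermediate $H^1$ bound with the weighted multiplier $-(t-t_0)\p_x^2u$, while you reuse the time-selection argument plus~\eqref{apriori-sobolev}; both are routine, but note that the good time $t_0$ must be chosen in $[t-\tfrac12,t]$ for \emph{each} $t\ge1$, not once and for all in $[\tfrac12,1]$ --- this is what the proof of Theorem~\ref{t-dissipation} that you invoke actually does.)

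Two technical points in your sketch should be firmed up, and both are standard. First, the initialisation of the comparison does not require any smoothing rate for $|\p_xu(t)|_{L^\infty}$ as $t\to0^+$: compare $w=\p_xu$ with the shifted supersolution $\Phi_\e(t)=\bigl(\sigma(t-\e)\bigr)^{-1}+\sqrt{H/\sigma}$ on $(\e,T]$. Since $w$ is bounded on $[\e,T]\times\bS$ while $\Phi_\e$ blows up as $t\to\e^+$, the comparison starts for free, and letting $\e\to0$ gives your bound; this avoids the delicate claim you flag as the main obstacle. (If $H=0$, add a small constant to $\Phi_\e$ to retain strictness at a first touching point.) Second, since $h(t,\cdot)$ is only of class $H^2$ and $h$ is merely continuous in time, $u$ is not $C^\infty$ for $t>0$; the differentiated equation and the first-touching argument should be justified at the level of strong solutions, for instance by mollifying $h$, running the argument for the regularised problem, and passing to the limit, under which the one-sided bound is stable. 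With these adjustments your proof is complete.
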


\begin{proof}
	By the Kruzhkov maximum principle (see~\cite{kruzhkov-1969} or~\cite[Section~4.1]{boritchev-2013}), there is a number $C_1>0$ depending only on~$\nu$, $f$, and $h$ such that, for any $t_0\ge1/2$, we have 
$$
|u(t_0)|_{\infty}\le C_1. 
$$
In view of inequality~\eqref{apriori-Linfty} (which remains valid for $L^\infty$ initial conditions), the norm of the solution in the space $L^\infty((t_0,t_0+1)\times\bS)$ is bounded by a constant depending only on~$h$. Taking the $L^2$ inner product of~\eqref{burgers} with the function $-(t-t_0)\p_x^2u$ and carrying out some simple transformations, we easily obtain a universal bound on the $H^1$ norm of the solution at $t=t_0+1/2$, so that 
$$
\sup_{t\ge1}\|u(t)\|_1\le C_2,
$$
where $C_2>0$ depends on~$\nu$, $f$, and~$h$. The proof can now be completed by exactly the same argument as for Theorem~\ref{t-dissipation}. 
\end{proof}

Combining Theorems~\ref{t-dissipation} and~\ref{t-damping} with the regularising property of~\eqref{burgers}, we arrive at the following bound for higher derivatives of solutions. Its proof can be obtained by taking the~$L^2$ inner product of~\eqref{burgers} with the functions $(t-t_0)\p_x^4u$ or~$\p_x^4u$ and carrying out some standard transformations. 

\begin{corollary}\label{c-H2bound}
	Under the hypotheses of Theorem~\ref{t-dissipation} (or those of Theorem~\ref{t-damping}), there is a number $C>0$ not depending on the initial condition such that the solution $u\in\XX$ of~\eqref{burgers}, \eqref{IC} satisfies the inequality
	\begin{equation}\label{H2-bound}
		\|u(t)\|_2\le C,
	\end{equation}
	where $t\ge 0$ varies in the same half-lines as in the above theorems. If, in addition, the initial condition~$u_0$ belongs to~$H^2$, then~\eqref{H2-bound} holds for all $t\ge0$ with some number~$C>0$ depending also on~$\|u_0\|_2$. 
\end{corollary}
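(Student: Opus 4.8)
The plan is to upgrade the \emph{time-integrated} $H^2$-bound already contained in~\eqref{uniform-bound} to a \emph{pointwise} one by a parabolic-smoothing (uniform Gronwall) argument. Since the cited theorems give $\sup_t\|u(t)\|_1\le C$ on the relevant half-line, the one-dimensional embedding $H^1(\bS)\hookrightarrow L^\infty(\bS)$ also yields $|u|_{L^\infty}\le C$ there, so that $f'(u)$ and $f''(u)$ are uniformly bounded along the trajectory; it therefore suffices to bound $|\p_x^2u(t)|_2$. The quantity $\int_t^{t+1}|\p_x^2u(s)|_2^2\,\dd s$ is already controlled by~\eqref{uniform-bound}, so the whole issue is to turn this integral bound into a bound at a single time. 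This is exactly the purpose of the two test functions in the statement: testing with $(t-t_0)\p_x^4u$ introduces a weight that vanishes at the left endpoint $t_0$ and hence annihilates the \emph{a priori} uncontrolled value $|\p_x^2u(t_0)|_2$ (we know only that $u(t_0)\in H^1$); testing with the unweighted $\p_x^4u$ is used when $u_0\in H^2$, since then $|\p_x^2u_0|_2$ is itself under control and one gets the estimate for all $t\ge0$.

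First I would write down the formal energy identity. Taking the $L^2$ inner product of~\eqref{burgers} with $\p_x^4u$ and integrating by parts using periodicity gives
$$
\tfrac12\,\p_t|\p_x^2u|_2^2+\nu\,|\p_x^3u|_2^2=(\p_x^2 f(u),\p_x^3u)+(h,\p_x^4u).
$$
Expanding $\p_x^2f(u)=f''(u)(\p_x u)^2+f'(u)\p_x^2u$ and using that $f'(u),f''(u)$ are bounded along the solution, the nonlinear term is handled by the Gagliardo--Nirenberg and Young inequalities: its contributions are absorbed into $\tfrac{\nu}{2}|\p_x^3u|_2^2$ at the cost of terms of the form $C|\p_x^2u|_2^2$ plus constants depending only on the already-controlled quantity $\|u\|_1$. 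This leaves a differential inequality $\p_t y\le a(t)+b(t)\,y$ for $y=|\p_x^2u|_2^2$, where $\int_t^{t+1}a$ and $\int_t^{t+1}b$ are bounded.

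To close the argument I would integrate the weighted version of this inequality over $[t_0,t_0+1]$: differentiating the weight produces the term $\int_{t_0}^{t_0+1}|\p_x^2u|_2^2\,\dd s$, finite by~\eqref{uniform-bound}, while the weight kills the endpoint value at $t_0$; this yields $|\p_x^2u(t_0+1)|_2\le C$, hence~\eqref{H2-bound}. For $u_0\in H^2$ the same computation without the weight, started at $t_0=0$, gives the bound for all $t\ge0$. The computation is formal, since $\p_x^4u$ asks for more regularity than $u\in\XX$ provides, and I would justify it on Galerkin approximations (or on difference quotients in~$x$) before passing to the limit. The main obstacle is the forcing term $(h,\p_x^4u)$: when $h\in C_b(\R_+,H^2)$ (the setting of Theorem~\ref{t-damping}) it is harmless, since $(h,\p_x^4u)=(\p_x^2h,\p_x^2u)\le\tfrac12\|h\|_2^2+\tfrac12 y$ with $\|h\|_2$ bounded; but under the hypotheses of Theorem~\ref{t-dissipation}, where $h$ lies only in $L^\infty(Q)$, this integration by parts is unavailable, and one must instead exploit the equation directly, writing $\nu\p_x^2u=\p_tu+\p_x f(u)-h$ and reducing the pointwise control of $|\p_x^2u|_2$ to that of $|\p_tu|_2$ (equivalently, invoking parabolic smoothing, in which the spatial derivatives hitting the heat semigroup compensate the apparent loss). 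Disposing of this low-regularity forcing cleanly is the delicate point; the nonlinear estimate above, though the longest part technically, is routine.
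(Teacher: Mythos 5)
Your proposal coincides, in its core, with the paper's own proof: the paper's entire argument for this corollary is the instruction to take the $L^2$ inner product of \eqref{burgers} with $(t-t_0)\p_x^4u$ (when only $u_0\in H^1$ is known, so that the weight kills the uncontrolled endpoint value) or with $\p_x^4u$ (when $u_0\in H^2$), and your Gagliardo--Nirenberg treatment of the nonlinear term together with the uniform-Gronwall upgrading of the integrated bound in \eqref{uniform-bound} are exactly the ``standard transformations'' meant there. In the setting of Theorem~\ref{t-damping}, where $h\in C_b(\R_+,H^2)$ (and likewise when $h$ takes values in $H^1$, as in Section~\ref{s-applications}, where one integration by parts and absorption into $\nu|\p_x^3u|_2^2$ suffices), your argument is complete and correct.

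The difficulty you flag under the hypotheses of Theorem~\ref{t-dissipation} ($h$ merely in $L^\infty(Q)$) is genuine, but your proposed repair does not close it. Writing $\nu\p_x^2u=\p_tu+\p_xf(u)-h$ only trades a pointwise bound on $|\p_x^2u(t)|_2$ for a pointwise bound on $|\p_tu(t)|_2$, and the natural route to the latter (differentiating the equation in $t$ and doing an energy estimate) requires $\p_th$, which is unavailable; nor does abstract parabolic smoothing rescue this endpoint, since a force bounded in $L^\infty(\bS)$ uniformly in time need not produce a solution bounded in $H^2$ at each time. Indeed, take $h$ equal to the $2^j$-th Fourier mode on disjoint time intervals of length $\sim\nu^{-1}2^{-2j}$ accumulating at a time $t_*$: each mode contributes a fixed positive amount (depending only on $\nu$) to the $H^2$ norm at $t_*$, so the heat part of the solution leaves $H^2$ while $|h|_{L^\infty(Q)}=1$; a bootstrap/contradiction argument (if $u$ were bounded in $H^2$, the nonlinear term would be bounded in $H^1$ and its Duhamel contribution in $H^2$) transfers this to the Burgers equation itself. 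So under the bare hypotheses of Theorem~\ref{t-dissipation}, inequality \eqref{H2-bound} cannot be obtained by this method, and the statement appears to need a slightly stronger hypothesis, e.g.\ $h$ bounded with values in $H^1$. To be fair, this is a defect of the paper's one-line proof as much as of your attempt: the paper performs the same formal computation without comment. Note also that what the rough-force case is actually used for --- inequality \eqref{a-bound} in the proof of Theorem~\ref{t-MT} --- only requires uniform $W^{1,\infty}$ bounds on the solutions, and those do survive $L^\infty$ forcing via Duhamel/Schauder-type $C^{1,\alpha}$ estimates; so the gap is reparable where it matters, but not by the route you sketch.
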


\subsection{Some properties of linear parabolic equations}
\label{s-harnck}
A key point of our proof of the stability of the Burgers flow is a lower bound for positive solutions of parabolic equations. The corresponding result is expressed in terms of the Harnack inequality for the linear equation
\begin{equation}\label{parabolic-pde}
	\p_tw-\nu\p_x^2w+\p_x\bigl(a(t,x)w\bigr)=0,\quad (t,x)\in Q_T,
\end{equation}
where $a:Q_T\to\R$ is a given function. A proof of the following proposition can be found in~\cite{KS-1980} (see also Section~IV.2 in~\cite{krylov1987}).  

\begin{proposition}\label{p-harnack}
	Let $\nu$, $T$, and~$\rho$ be positive numbers and let $a\in L^\infty(Q_T)$ be a function such that $\p_xa\in L^\infty(Q_T)$ and 
	\begin{equation}\label{a-bound}
		|a|_{L^\infty(Q_T)}+|\p_xa|_{L^\infty(Q_T)}\le\rho.
	\end{equation}
	Then, for any $T'\in(0,T)$, there is $\theta=\theta(T',T,\nu,\rho)>0$ such that, for any non-negative solution $w\in\XX_T$ of~\eqref{parabolic-pde}, we have  
	\begin{equation}\label{harnack-inequality}
\theta\max_{x\in\bS}w(T',x)\le \min_{x\in\bS}w(T,x). 
	\end{equation}
\end{proposition}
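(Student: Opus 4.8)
The plan is to reduce the assertion to the classical interior Harnack inequality for uniformly parabolic equations and then to globalise it using the compactness of the circle. First I would expand the divergence-form equation~\eqref{parabolic-pde} as
\begin{equation*}
\p_tw-\nu\p_x^2w+a\,\p_xw+(\p_xa)\,w=0,
\end{equation*}
and observe that, under hypothesis~\eqref{a-bound}, this is a uniformly parabolic equation whose ellipticity constant is~$\nu$ and whose first- and zeroth-order coefficients are bounded by~$\rho$. Since $w\in\XX_T$ is a strong, hence continuous, non-negative solution, the De~Giorgi--Nash--Moser / Krylov--Safonov theory applies and yields a \emph{local} parabolic Harnack inequality: there are $r>0$ and $C>0$, depending only on~$\nu$ and~$\rho$, such that for any parabolic cylinder $(t_0,t_0+r^2)\times I\subset Q_T$, with $I\subset\bS$ an arc of length~$r$ and $I'\subset I$ the concentric sub-arc of length~$r/2$, one has
\begin{equation*}
\sup_{x\in I'}w(t_0+r^2/4,\,x)\le C\inf_{x\in I'}w(t_0+3r^2/4,\,x).
\end{equation*}

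The second step is the globalisation. Because $\bS$ has unit length, it can be covered by a fixed finite number of overlapping sub-arcs $I'_0,\dots,I'_N$, with $x_0\in I'_0$, $y_0\in I'_N$, and $I'_j\cap I'_{j+1}\neq\varnothing$, where $x_0$ realises the maximum of $w(T',\cdot)$ and $y_0$ the minimum of $w(T,\cdot)$. Starting at time~$T'$ and applying the local inequality on each arc in turn, advancing the time level by a fixed step at each of the~$N$ stages and passing the estimate from one arc to the next through a point of the overlap, I chain the bounds around the circle and obtain $w(T',x_0)\le C^{N}\,w(\tau,y_0)$ at an intermediate time~$\tau$. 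Choosing the arc length small enough (depending on~$T-T'$) so that the cumulative time advance does not exceed~$T-T'$, and arranging the last step to land exactly at~$T$, I arrive at
\begin{equation*}
\max_{x\in\bS}w(T',x)\le \theta^{-1}\min_{x\in\bS}w(T,x),
\end{equation*}
with $\theta=C^{-N}>0$ depending on $T'$, $T$, $\nu$, $\rho$; note that $\theta$ degenerates as $T'\to T$, which is expected since there is then no time for the diffusion to redistribute mass.

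A more streamlined route, specific to the circle, is worth recording. Integrating~\eqref{parabolic-pde} over~$\bS$ and using periodicity shows that the mean $M:=\langle w(t,\cdot)\rangle$ is independent of~$t$. On the one hand, the local boundedness estimate for non-negative subsolutions (the $\sup$-half of Moser's theory) bounds $\max_{x}w(T',x)$ by a constant times the space--time $L^1$ norm of~$w$ over the slab $[T'/2,T']\times\bS$, hence by $C_1 M$; here the hypothesis $T'>0$ is used to leave room for the regularising interval $[T'/2,T']$. On the other hand, the weak Harnack inequality (the $\inf$-half), equivalently the lower Gaussian bound for the transition density of the associated diffusion $\dd X_s=a(s,X_s)\,\dd s+\sqrt{2\nu}\,\dd B_s$, gives $\min_{x}w(T,x)\ge c_2 M$ with $c_2>0$ depending on $T-T'$, $\nu$, $\rho$. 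Combining the two inequalities yields~\eqref{harnack-inequality} with $\theta=c_2/C_1$.

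The main obstacle is entirely contained in the first step: the local Harnack inequality---whether obtained via the De~Giorgi--Nash--Moser oscillation decay or the Krylov--Safonov measure-theoretic argument---is a deep result whose proof I would not reproduce, invoking instead the references~\cite{KS-1980,krylov1987}. Once it is granted, the remaining work is routine: the finite chaining and the bookkeeping of how~$\theta$ depends on $T'$, $T$, $\nu$, and~$\rho$. The only points requiring genuine care are that the number of arcs and the size of the time-steps stay controlled in terms of the stated parameters, and that consecutive arcs overlap so that the $\sup$ produced on one arc genuinely dominates the $\inf$ needed to initiate the next.
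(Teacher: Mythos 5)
The paper does not actually prove this proposition: it is quoted from the literature, with pointers to Krylov--Safonov~\cite{KS-1980} and Section~IV.2 of~\cite{krylov1987}. Your proposal is correct modulo those same deep cited results, and it supplies precisely the step the paper leaves implicit, namely how the \emph{global-on-the-circle} statement~\eqref{harnack-inequality} (maximum over all of~$\bS$ at time~$T'$ versus minimum over all of~$\bS$ at time~$T$) follows from the \emph{interior} Harnack inequality of the references. Your first route --- rewrite~\eqref{parabolic-pde} in non-divergence form with coefficients controlled by~\eqref{a-bound}, invoke the local parabolic Harnack inequality, then chain overlapping arcs around the compact circle --- is the standard globalisation; the only delicate points, which you rightly flag as bookkeeping, are that the first cylinder must have its upper time level exactly at~$T'$ (forcing $r\lesssim\sqrt{T'}$ and hence the dependence of~$\theta$ on~$T'$), that chains to different target points must be padded to a common number of steps so that every chain lands exactly at time~$T$ (padding works because Harnack on a fixed arc yields $\inf_{I'}w(t+\delta)\ge C^{-1}\inf_{I'}w(t)$), and that the cumulative time advance must fit into~$T-T'$. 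Your second, streamlined route is genuinely different and well adapted to this setting: conservation of the mean $\langle w(t,\cdot)\rangle$, which the divergence structure on~$\bS$ gives for free, decouples~\eqref{harnack-inequality} into two one-sided bounds --- a local maximum principle ($\max_x w(T',x)\le C_1 M$ via the space--time $L^1$ norm) and a lower bound ($\min_x w(T,x)\ge c_2 M$ via Aronson-type lower Gaussian estimates for the fundamental solution) --- thereby avoiding the chaining altogether. Either route is a legitimate completion of what the paper merely cites; there is no gap.
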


We shall also need the property of non-expansion of the $L^1$ norm of solutions for~\eqref{parabolic-pde}. The latter  follows immediately from the maximum principle by a duality argument; see Lemma~3.2.2 in~\cite{hormander1997}.

\begin{proposition}\label{p-L1nonexp}
	Under the hypotheses of Proposition~\ref{p-harnack}, for any solution $w\in \XX_T$ of~\eqref{parabolic-pde}, we have 
	\begin{equation}\label{L1-nonexp}
		|w(t)|_1\le |w(s)|_1\quad\mbox{for $0\le s\le t\le T$}. 
	\end{equation}
\end{proposition}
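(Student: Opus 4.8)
The plan is to use the standard $L^1$--$L^\infty$ duality, solving the formal adjoint of~\eqref{parabolic-pde} backward in time and controlling it through the maximum principle. I would begin from the variational identity
$$
|w(t)|_1=\sup\bigl\{(w(t),\phi):\phi\in C^\infty(\bS),\ |\phi|_\infty\le1\bigr\},
$$
so that it suffices to bound $(w(t),\phi)$ by $|w(s)|_1$ uniformly over all smooth~$\phi$ with $|\phi|_\infty\le1$, and then pass to the supremum.

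The key object is the adjoint problem. A formal integration by parts over $[s,t]\times\bS$, discarding the $x$-boundary terms by periodicity, shows that if $\psi$ solves the terminal-value problem
$$
\p_\tau\psi+\nu\p_x^2\psi+a\,\p_x\psi=0\ \text{ on }[s,t]\times\bS,\qquad \psi(t,\cdot)=\phi,
$$
then $(w(t),\phi)=(w(s),\psi(s))$. Reversing time via $\tilde\psi(\tau,x):=\psi(t-\tau,x)$ turns this into the forward parabolic Cauchy problem $\p_\tau\tilde\psi-\nu\p_x^2\tilde\psi-a(t-\tau,x)\,\p_x\tilde\psi=0$ with datum~$\phi$, which is well posed in the class~$\XX_T$ by the linear parabolic theory underlying Theorem~\ref{t-WP}, the drift being bounded by~\eqref{a-bound}. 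Since $w$ and $\psi$ then both lie in~$\XX_T$, the integration-by-parts identity above is rigorous.

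The crux, and the step I expect to require the most care, is the maximum principle for the adjoint equation, whose coefficients are merely bounded rather than smooth. I would establish it by a Stampacchia-type truncation: writing $M=|\phi|_\infty$ and testing the reversed equation against $(\tilde\psi-M)_+$, the drift term contributes, after one integration by parts, a quantity of the form $-\tfrac12\int_\bS\p_xa\,(\tilde\psi-M)_+^2$, which is bounded by $|\p_xa|_{L^\infty}\,|(\tilde\psi-M)_+|_2^2$. This is precisely where the second bound in~\eqref{a-bound} enters. Since $(\tilde\psi(0)-M)_+=(\phi-M)_+=0$, the Gronwall inequality forces $(\tilde\psi(\tau)-M)_+\equiv0$, and the symmetric argument applied to $(-M-\tilde\psi)_+$ gives $|\tilde\psi(\tau)|_\infty\le M\le1$ for all~$\tau$; in particular $|\psi(s)|_\infty\le1$.

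Combining the two ingredients yields
$$
(w(t),\phi)=(w(s),\psi(s))\le|w(s)|_1\,|\psi(s)|_\infty\le|w(s)|_1,
$$
and passing to the supremum over admissible~$\phi$ gives the desired inequality $|w(t)|_1\le|w(s)|_1$. The only remaining routine point is the approximation of the non-smooth maximiser $\mathrm{sgn}\,w(t)$ by smooth functions, which is absorbed into the supremum formulation above.
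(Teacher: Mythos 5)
Your proposal is correct and follows essentially the same route as the paper, whose proof consists precisely of invoking ``the maximum principle by a duality argument'' (Lemma~3.2.2 in~\cite{hormander1997}): your backward adjoint problem $\p_\tau\psi+\nu\p_x^2\psi+a\,\p_x\psi=0$, the pairing identity $(w(t),\phi)=(w(s),\psi(s))$, and the $L^\infty$ bound $|\psi(s)|_\infty\le|\phi|_\infty$ are exactly the ingredients of that argument, here written out in full. The details you supply --- well-posedness of the time-reversed problem with bounded drift and the Stampacchia-type truncation giving the maximum principle (where one could even avoid using $\p_xa$ by absorbing the drift term into the viscosity via Young's inequality) --- are sound.
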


\section{Main result}
\label{s-MR}
The main tool for proving the stability of the Burgers flow is the contraction of the~$L^1$ norm of the difference between two solutions. The rate of contraction depends on the size of solutions, which in turn is determined by the initial conditions and the right-hand side. More precisely, we have the following result. 

\begin{theorem}\label{t-MT}
	Let $f\in C^2(\R)$ be an arbitrary function. Then, for any $c\in\R$ and any  positive numbers $\nu$, $R$, and~$T$, there is $q\in(0,1)$ such that the following property holds: for any external force $h\in L^\infty(Q_T)$, with an $L^\infty$ norm not exceeding~$R$ and mean value satisfying~\eqref{mean-value} for a.\,e.~$t\in J_T$, and any initial conditions $u_0,v_0\in B_{H^1}(R)$ with $\langle u_0\rangle=\langle v_0\rangle=c$, the corresponding solutions satisfy the inequality 
	\begin{equation}\label{q-contraction}
	|u(T)-v(T)|_1\le q\,|u_0-v_0|_1.
	\end{equation}
\end{theorem}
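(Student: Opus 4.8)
The plan is to linearise the problem. Setting $w=u-v$, the difference solves
\begin{equation*}
\p_t w-\nu\p_x^2 w+\p_x(a w)=0,\qquad a(t,x)=\int_0^1 f'\bigl(v(t,x)+s\,w(t,x)\bigr)\,\dd s,
\end{equation*}
which is exactly Eq.~\eqref{parabolic-pde}. Integrating over~$\bS$ and using periodicity shows that $\langle w(t)\rangle$ is conserved, so the hypothesis $\langle u_0\rangle=\langle v_0\rangle=c$ gives $\langle w(t)\rangle\equiv0$; this is the feature that makes the argument work. To apply Propositions~\ref{p-harnack} and~\ref{p-L1nonexp} I need $a,\p_x a\in L^\infty$. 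The bound $|a|_{L^\infty(Q_T)}\le\sup\{|f'(y)|:|y|\le M\}$ follows from the $L^\infty$ estimate~\eqref{apriori-Linfty} of Theorem~\ref{t-WP}, which controls $M:=\max(|u|_{L^\infty(Q_T)},|v|_{L^\infty(Q_T)})$ in terms of $R$ and $T$. For $\p_x a$, which involves $\p_x u$ and $\p_x v$ in $L^\infty$, I would invoke the parabolic smoothing underlying Corollary~\ref{c-H2bound}: for any $\delta\in(0,T)$ one has $\sup_{t\in[\delta,T]}(\|u(t)\|_2+\|v(t)\|_2)\le C(\delta)$, whence $a,\p_x a$ are bounded on $[\delta,T]\times\bS$ by a constant $\rho$ depending only on $\nu,f,R,T,c$.

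The heart of the proof is a positivity decomposition on $[\delta,T]$, with $\delta:=T/2$. Writing $w(\delta)=w(\delta)_+-w(\delta)_-$ for the positive and negative parts, the conservation $\langle w(\delta)\rangle=0$ forces $|w(\delta)_+|_1=|w(\delta)_-|_1=\tfrac12|w(\delta)|_1$. Let $\phi$ and $\psi$ be the solutions of~\eqref{parabolic-pde} on $[\delta,T]$ issued at $t=\delta$ from $w(\delta)_+$ and $w(\delta)_-$. By the maximum principle these are non-negative, and by linearity and uniqueness $w=\phi-\psi$ on $[\delta,T]$; moreover, integrating the equation shows that $|\phi(t)|_1=\int_\bS\phi(t)\,\dd x$ is constant in~$t$, and likewise for~$\psi$, so $|\phi(T)|_1=|\psi(T)|_1=\tfrac12|w(\delta)|_1$.

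I would then extract contraction from the Harnack inequality. Since $\max_x\phi(T',\cdot)\ge\int_\bS\phi(T')\,\dd x=|\phi(\delta)|_1$ for any intermediate $T'\in(\delta,T)$ (the circle has unit length), Proposition~\ref{p-harnack} applied on $[\delta,T]$ yields $\min_x\phi(T,\cdot)\ge\theta\,|\phi(\delta)|_1$ and similarly $\min_x\psi(T,\cdot)\ge\theta\,|\psi(\delta)|_1$, with a Harnack constant $\theta=\theta(T,\nu,\rho)$ that may be taken in $(0,1)$. Using the pointwise identity $|\phi-\psi|=\phi+\psi-2\min(\phi,\psi)$ for non-negative functions, together with $|\phi(T)|_1+|\psi(T)|_1=|w(\delta)|_1$, I obtain
\begin{equation*}
|w(T)|_1=|w(\delta)|_1-2\int_\bS\min\bigl(\phi(T),\psi(T)\bigr)\,\dd x\le|w(\delta)|_1-2\theta\cdot\tfrac12|w(\delta)|_1=(1-\theta)\,|w(\delta)|_1.
\end{equation*}
Finally, the $L^1$ non-expansion of the difference of two Burgers solutions (a direct consequence of the maximum principle, cf.\ the Introduction) gives $|w(\delta)|_1\le|w_0|_1$ on $[0,\delta]$, and combining the two estimates proves~\eqref{q-contraction} with $q=1-\theta\in(0,1)$.

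I expect the main obstacle to be the low time-regularity near $t=0$: with merely $H^1$ data the gradients $\p_x u,\p_x v$ need not lie in $L^\infty$ up to $t=0$, so Propositions~\ref{p-harnack} and~\ref{p-L1nonexp} cannot be applied on all of $Q_T$ at once. This is precisely what forces the splitting of $[0,T]$ at $\delta=T/2$ and the use of parabolic smoothing to secure bounded coefficients on $[\delta,T]$. The conceptual point—that the zero-mean constraint equalises the masses of $w_\pm$, so that the Harnack lower bound on the overlap $\int_\bS\min(\phi(T),\psi(T))\,\dd x$ captures a fixed fraction of $|w(\delta)|_1$—is what upgrades non-expansion to strict contraction.
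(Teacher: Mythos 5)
Your proof is correct, and its skeleton is the same as the paper's: write $w=u-v$ as a solution of the linear divergence-form equation~\eqref{parabolic-pde}, split into non-negative solutions of that equation issued from the positive and negative parts of the difference, and combine the Harnack inequality (Proposition~\ref{p-harnack}) with $L^1$ non-expansion (Proposition~\ref{p-L1nonexp}) to convert the zero-mean constraint into strict contraction. You deviate in two ways worth recording. First, the paper decomposes at $t=0$ and, lacking a lower bound on the maxima of $w^\pm(T/2,\cdot)$, runs a dichotomy: either one of the maxima is $\le\frac14|w_0|_1$ (giving the factor $\frac12$ via non-expansion and the zero-mean balance of masses), or both exceed $\frac14|w_0|_1$ (feeding Harnack and giving $1-\frac{\theta}{2}$). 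You instead observe that the divergence structure of~\eqref{parabolic-pde} on the circle conserves the mass of the non-negative pieces exactly, so $\max_x\phi(T',\cdot)\ge\int_\bS\phi(T')\,\dd x=\frac12|w(\delta)|_1$ holds automatically on the unit-length circle; this eliminates the case analysis and yields $q=1-\theta$ in one stroke. Second, you decompose at $\delta=T/2$ rather than at $t=0$ precisely because, with data merely in $B_{H^1}(R)$, the coefficient bound~\eqref{a-bound} cannot be asserted uniformly up to $t=0$ ($\p_xa$ involves $\p_xu,\p_xv$ in $L^\infty$). The paper invokes~\eqref{H2-bound} for $u$ and $v$ on all of $Q_T$, but Corollary~\ref{c-H2bound} gives that for all $t\ge0$ only for $H^2$ data (with a constant depending on $\|u_0\|_2$, not on $R$), and for $H^1$ data only on a half-line that need not even meet $[0,T]$ when $T$ is small; your combination of smoothing on $[\delta,T]$ with the nonlinear $L^1$ non-expansion on $[0,\delta]$ is the correct repair of this point. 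The one step you should spell out is the smoothing bound $\sup_{\delta\le t\le T}\|u(t)\|_2\le C(\delta,R,T)$ itself: it follows, for instance, by choosing $t_1\in[0,\delta]$ with $\|u(t_1)\|_2^2\le\delta^{-1}\|u\|_{L^2(J_T,H^2)}^2$ (finite by~\eqref{apriori-sobolev}) and then applying the $H^2$-data version of Corollary~\ref{c-H2bound} starting from time~$t_1$.
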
 

\begin{proof}
Let us write~$w$ for the difference $u-v$ and note that it satisfies Eq.~\eqref{parabolic-pde}, in which 
$$
a(t,x)=\int_0^1 f'(v(t,x)+\tau w(t,x))\,\dd \tau. 
$$
Moreover, we have $\langle w(t)\rangle=0$ for any $t\in J_T$, and in view of inequality~\eqref{H2-bound} applied to~$u$ and~$v$, there is $\rho>0$ depending only on~$\nu$, $f$, $R$, and~$T$ such that~\eqref{a-bound} holds. In particular, the Harnack inequality~\eqref{harnack-inequality} is valid for any non-negative solution of~\eqref{parabolic-pde}. 

Let us set $w_0=u_0-v_0$ and write~$w_0^+$  and~$w_0^-$ for the positive and negative parts of~$w_0$; that is, $w_0^\pm=\max\{\pm w_0, 0\}$. We denote by~$w^\pm$ the solution of~\eqref{parabolic-pde} that is equal to~$w_0^\pm$ at time $t=0$, so that $w(t)=w^+(t)-w^-(t)$ for any $t\in J_T$ and~$w^{\pm}$ are non-negative functions. If $w^+(T/2,x)\le \frac14|w_0|_1$ for any $x\in\bS$, then $|w^+(T/2)|_1\le \frac14|w_0|_1$. Since the mean value of $w(t)$ is zero, the $L^1$ norms of $w^+(t)$ and $w^-(t)$ are the same, so that $|w^-(T/2)|_1\le \frac14|w_0|_1$. Combining this with~\eqref{L1-nonexp}, we derive
$$
|w(T)|_1\le |w(T/2)|_1
\le |w^+(T/2)|_1+|w^-(T/2)|_1\le \frac12\,|w_0|_1. 
$$
Exactly the same argument applies when $w^-(T/2,x)\le \frac14|w_0|_1$ for any $x\in\bS$. 

We now assume that 
$$
\max_{x\in\bS}w^\pm(T/2,x)\ge\frac14\,|w_0|_1. 
$$
In view of the Harnack inequality~\eqref{harnack-inequality}, there is a number $\theta>0$ such that
$$
\min_{x\in\bS}w^\pm(T,x)\ge \theta\max_{x\in\bS}w^\pm(T/2,x)\ge\frac{\theta}{4}\,|w_0|_1. 
$$
Combining this with~\eqref{L1-nonexp}, we derive
\begin{align*}
	|w(T,x)|_1&=\int_\bS \bigl|w^+(T,x)-w^-(T,x)\bigr|\,\dd x\\
	&=\int_\bS \bigl|(w^+(T,x)-\tfrac{\theta}{4}|w_0|_1)
	-(w^-(T,x)-\tfrac{\theta}{4}|w_0|_1)\bigr|\,\dd x\\
	&\le\int_\bS \bigl(w^+(T,x)-\tfrac{\theta}{4}|w_0|_1\bigr)\,\dd x
	+\int_\bS \bigl(w^-(T,x)-\tfrac{\theta}{4}|w_0|_1\bigr)\,\dd x\\
	&=|w^+(T)|_1+|w^-(T)|_1-\tfrac{\theta}{2}\,|w_0|_1\\
	&\le |w_0^+|_1+|w_0^-|_1-\tfrac{\theta}{2}\,|w_0|_1
	=\bigl(1-\tfrac{\theta}{2}\bigr)|w_0|_1.
\end{align*}
We thus obtain the required inequality~\eqref{q-contraction} with $q=\max\{\frac12,1-\frac\theta2\}$. 
\end{proof}

\section{Applications}
\label{s-applications}

\subsection{Uniqueness and stability of a bounded trajectory}
Let us consider Eq.~\eqref{burgers}, in which $f\in C^2(\R)$ is an arbitrary  function, and~$h$ belongs to the space $C_b(\R,H^1)$  and satisfies~\eqref{mean-value} for all $t\in\R$. The following result describes the large-time behaviour of solutions for~\eqref{burgers}. 

\begin{theorem}\label{t-attractor}
	Under the above hypotheses, for any $c\in\R$, there is a unique solution~$v(t,x)$ of~\eqref{burgers} in the space $\XX(\R)\cap C_b(\R,H^1)$ such that 
\begin{equation}\label{mean-value-solution}
\langle v(t,\cdot)\rangle=c\quad\mbox{for all $t\in\R$}. 
\end{equation}
	Moreover, there exists $\gamma>0$ such that, for any $R>0$, a sufficiently large $C_R>0$, and any initial condition $u_0\in B_{H^1}(R)$ with $\langle u_0\rangle=c$, the corresponding solution~$u(t,x)$ satisfies the inequality 
\begin{equation}\label{expo-conv}
	\|u(t)-v(t)\|_1\le C_R\,e^{-\gamma t}|u_0-v(0)|_1^{2/5}, \quad t\ge1.
\end{equation}
\end{theorem}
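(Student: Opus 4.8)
My plan is to produce the complete bounded trajectory $v$ first, then deduce uniqueness and the decay estimate from the contraction of Theorem~\ref{t-MT}. Since the parabolic flow cannot be run backward in time, I would construct $v$ by a \emph{pullback} (backward-trajectory) argument. Fix the constant function~$c$ as a reference datum (it has mean~$c$) and, for each $n\in\N$, let $v_n$ be the solution of~\eqref{burgers} on $[-n,\infty)$ with $v_n(-n)=c$. By Theorem~\ref{t-dissipation} and Corollary~\ref{c-H2bound} there are $R_0>0$ and $T_*>0$, independent of~$n$, such that $v_n(t)$ stays in $B_{H^1}(R_0)$ (indeed bounded in $H^2$) for all $t\ge -n+T_*$. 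Fixing $t\in\R$, I would show that $(v_n(t))$ is Cauchy in $L^1$: for $m>n$ large, both $v_n$ and $v_m$ lie in a common $H^1$-ball on $[-n+T_*,t]$, so iterating~\eqref{q-contraction} over unit subintervals gives $|v_n(t)-v_m(t)|_1\le C\,q^{\,n+t-T_*}\to0$. The interpolation inequality below, together with the uniform $H^2$ bound, upgrades this to a Cauchy property in $H^1$, so $v(t):=\lim_n v_n(t)$ exists in~$H^1$.

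To check that $v$ is a genuine solution on all of~$\R$, I would fix an interval $[a,b]$; for $n$ with $-n<a$ the restrictions $v_n|_{[a,b]}$ are solutions with data $v_n(a)\to v(a)$ in $H^1$, uniformly bounded in $\XX_{[a,b]}$ and in $C([a,b],H^2)$ by~\eqref{apriori-sobolev} and~\eqref{H2-bound}. Passing to the limit via the uniqueness and continuous dependence of Theorem~\ref{t-WP} identifies $v|_{[a,b]}$ with the solution issued from~$v(a)$; as $[a,b]$ is arbitrary, $v$ solves~\eqref{burgers} on~$\R$, belongs to $\XX(\R)\cap C_b(\R,H^1)$, and satisfies~\eqref{mean-value-solution} since each $v_n$ has mean~$c$. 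Uniqueness is then immediate: if $v,\tilde v$ are two such bounded trajectories, they stay in a common ball, so applying Theorem~\ref{t-MT} on $[s,t]$ and letting $s\to-\infty$ forces $|v(t)-\tilde v(t)|_1=0$, whence $v=\tilde v$.

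For the convergence estimate~\eqref{expo-conv}, take $u_0\in B_{H^1}(R)$ with $\langle u_0\rangle=c$. By Theorem~\ref{t-dissipation} and Corollary~\ref{c-H2bound} there is $R'=R'(R)$ with $u(t),v(t)\in B_{H^1}(R')$ for all $t\ge0$; with this radius, mean~$c$, and $T=1$, Theorem~\ref{t-MT} yields $q\in(0,1)$. Writing $w=u-v$ and iterating~\eqref{q-contraction} on each $[k,k+1]$ together with the non-expansion~\eqref{L1-nonexp} of Proposition~\ref{p-L1nonexp} gives $|w(t)|_1\le C\,e^{-\gamma_0 t}|u_0-v(0)|_1$ with $\gamma_0=-\ln q>0$, i.e.\ $L^1$ exponential decay. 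Converting this to an $H^1$ bound is where the exponent $2/5$ enters: since $\langle w(t)\rangle=0$, the Gagliardo--Nirenberg interpolation on~$\bS$ gives $\|w\|_1\le C\,|w|_1^{2/5}\|w\|_2^{3/5}$ (the exponent is dictated by the scaling heuristic $x\mapsto\lambda x$, under which the left side behaves like $\lambda^{1/2}$ and the right like $\lambda^{3/2-5\theta/2}$, forcing $\theta=2/5$). As $u$ and $v$ are bounded in $H^2$ for $t\ge1$, with a constant depending on~$R$ on the finite window $[1,T_*(R)]$ and an absolute one afterwards, we have $\|w(t)\|_2\le C_R$; combining this with the $L^1$ decay yields~\eqref{expo-conv} with $\gamma=2\gamma_0/5$.

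The main obstacle is the existence step. Because the equation is non-autonomous and not backward-solvable, the complete bounded trajectory has to be assembled from backward trajectories, and the delicate point is to verify that the $L^1$-Cauchy time-slices $\lim_n v_n(t)$ genuinely glue together into an $\XX(\R)$-solution of~\eqref{burgers}, which is where the uniform $H^2$ bounds, the interpolation, and the continuous-dependence part of Theorem~\ref{t-WP} all have to be invoked simultaneously. By contrast, uniqueness and the quantitative decay~\eqref{expo-conv} are largely mechanical consequences of Theorem~\ref{t-MT}, Proposition~\ref{p-L1nonexp}, and the interpolation inequality.
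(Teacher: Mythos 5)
Your construction of the bounded trajectory $v$ and your uniqueness argument are sound, and they in fact coincide with the paper's own proof (the paper likewise takes solutions with data $c$ at times $-n$ and passes to the limit); the interpolation $\|w\|_1\le C\,|w|_1^{2/5}\|w\|_2^{3/5}$ is also exactly the device the paper uses to convert $L^1$ decay into \eqref{expo-conv}. The genuine gap is in your final paragraph: the rate you produce is $\gamma=\tfrac25\gamma_0$ with $\gamma_0=-\ln q$, where $q$ is the contraction constant of Theorem~\ref{t-MT} for the radius $R'=R'(R)$, so your $\gamma$ depends on $R$ and (since nothing in Theorem~\ref{t-MT} prevents $q\to1$ as the radius grows) degenerates as $R\to\infty$. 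But the theorem quantifies $\gamma$ \emph{before} $R$: a single $\gamma>0$ must serve all $R$, with only the prefactor $C_R$ allowed to depend on $R$. As written, you have proved the strictly weaker statement ``for every $R$ there exist $\gamma_R>0$ and $C_R$ such that\dots''. (A smaller point: the claim $u(t)\in B_{H^1}(R')$ for \emph{all} $t\ge0$ does not follow from Theorem~\ref{t-dissipation} alone, which only covers $t\ge C\ln(R+2)$; on the transient interval you also need the finite-time bound \eqref{apriori-sobolev}.)

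The fix uses the dissipativity mechanism you already invoke for the $H^2$ bound but never exploit for $q$: by Theorem~\ref{t-dissipation} and Corollary~\ref{c-H2bound}, after the transient time $t_0(R)=C\ln(R+2)$ both $u(t)$ and $v(t)$ lie in a ball of \emph{universal} radius (and satisfy the universal $H^2$ bound), so Theorem~\ref{t-MT} applied with that radius yields a universal $q_0$ and hence a universal $\gamma$; on $[0,t_0(R)]$ one uses only the non-expansion \eqref{L1-nonexp}, and the loss $e^{\gamma t_0(R)}=(R+2)^{C\gamma}$ incurred by starting the exponential clock at $t_0(R)$ is absorbed into $C_R$. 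This two-regime bookkeeping (contraction on the universal absorbing set, all $R$-dependence confined to the prefactor) is precisely how Step~1 of the paper's proof is organised; with this modification your argument becomes complete and is then essentially the paper's proof, up to the order in which the bounded trajectory and the decay estimate are established.
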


\begin{proof}
{\it Step~1: Exponential stability\/}. We first prove that, for any $R>0$ and any initial conditions $u_{01},u_{02}\in B_{H^1}(R)$ with $\langle u_{01}\rangle=\langle u_{02}\rangle$, the corresponding solutions satisfy the inequality
\begin{equation}\label{expodecay-difference}
	\|u_1(t)-u_2(t)\|_1\le C_R\,e^{-\gamma t}|u_{01}-u_{02}|_1^{2/5}, \quad t\ge1,
\end{equation}
where $\gamma>0$ does not depend on the initial conditions. To this end, we first consider the case when the solutions satisfy inequality~\eqref{H2-bound} for all $t\ge0$. By a well-known interpolation inequality (see Section~15.1 in~\cite{BIN1979}), for any $u\in H^2$ we have $\|u\|_1\le C_1\|u\|_2^{3/5}|u|_1^{2/5}$. Since the $H^2$ norm of the difference $u=u_1-u_2$ is bounded, inequality~\eqref{expodecay-difference} will be established for $t\ge0$ once we prove the exponential decay of the $L^1$ norm $|u(t)|_1$. 

To see this, let us note that the difference $u(t,x)$ satisfies the linear equation~\eqref{parabolic-pde}. In view of the non-expansion of the $L^1$ norm (see Proposition~\ref{p-L1nonexp}), it suffices to prove that $|u(k)|_1\le q^k|u(0)|_1$ for any integer $k\ge0$, where $q\in(0,1)$ does not depend on~$k$. The latter estimate is an immediate consequence of Theorem~\ref{t-MT}. 

To prove~\eqref{expodecay-difference} for arbitrary initial conditions in~$B_{H^1}(R)$, note that the solutions~$u_i$ satisfy~\eqref{H2-bound} for $t\ge1$ with some constant $C=C(R)$. Using the interpolation inequality and the non-expansion of the $L^1$ norm, we see that $\|u(t)\|_1\le C_2(R)|u(0)|_1^{2/5}$ for $t\ge1$. On the other hand, the solutions $u_i\in\XX$ satisfy~\eqref{H2-bound} for $t\ge t_0=C_3\ln(R+2)$, so that 
$$
\|u(t)\|_1\le C_4e^{-\gamma(t-t_0)}|u(t_0)|_1^{2/5}
\le C_5e^{-\gamma t}|u(t_0)|_1^{2/5}
$$ 
for $t\ge t_0$. Combining this with the above estimate for $\|u(t)\|_1$ valid for $t\ge1$, we arrive at~\eqref{expodecay-difference}. 

\smallskip
{\it Step~2: Bounded solution\/}. We now fix $c\in\R$ and construct an $H^1$-bounded solution $v\in\XX(\R)$ of~\eqref{burgers} defined throughout the real line such that~\eqref{mean-value-solution} holds. Once it is done, the stability inequality~\eqref{expo-conv} will follow from~\eqref{expodecay-difference}. 

The construction of~$v$ is based on a standard argument using solutions issued from  initial times going to~$-\infty$. Namely, let us denote by $u^n$ the solution of~\eqref{burgers} satisfying the initial condition $u^n(-n)=c$. Then, for any $m<n$ and $T\ge 1-m$, inequality~\eqref{expodecay-difference} applied on the half-line $[-m,+\infty)$ implies that 
$$
\sup_{|t|\le T}\|u^m(t)-u^n(t)\|_1\le C_6e^{-\gamma (T+m)}. 
$$
Letting $m\to+\infty$, we see that $(u^n)$ converges in~$H^1$ to a limit $v\in C_b(\R,H^1)$, uniformly on any bounded interval. Furthermore, the sequence~$(u^n)$ is bounded in~$\XX([-T,T])$ for any $T>0$, and the weak compactness of a unit ball in a Hilbert space implies that $v\in \XX(\R)$. It is straightforward to see that $v$ is a solution of~\eqref{burgers} and that~\eqref{mean-value-solution} holds. This completes the proof of the theorem. 
\end{proof}

\subsection{Equation with random external force}
We now consider the case when the external force~$h$ is a stochastic process. Namely, we assume that almost every trajectory of~$h$ is a continuous function of time with range in~$H^2$ such that relation~\eqref{mean-value} holds for $t\ge0$, and the random variable 
\begin{equation}\label{h-bound}
K:=\limsup_{T\to\infty}\frac1T\int_0^T\max_{t\le s\le t+1}\|h(s)\|_2\,\dd t
\end{equation}
is almost surely finite. For instance, in view of the Birkhoff theorem, condition~\eqref{h-bound} is certainly satisfied if~$h$ is a stationary $H^2$-valued stochastic process with almost surely continuous trajectories such that 
\begin{equation}\label{M-finite}
	M:=\E\max_{0\le t\le 1}\|h(t)\|_2<\infty.
\end{equation}

\begin{theorem}\label{t-stochastic}
	Let $f\in C^2(\R)$ be an arbitrary function satisfying~\eqref{lower-bound}, let the above hypotheses be fulfilled for~$h$, and let $u_0,v_0\in H^1$ be arbitrary initial conditions such that $\langle u_0\rangle=\langle v_0\rangle$. Then, with probability~$1$, the corresponding solutions belong to~$\XX$ and are such that 
	\begin{equation}\label{random-convergence}
	|u(t)-v(t)|_1\to0\quad\mbox{as $t\to\infty$}. 
	\end{equation}
\end{theorem}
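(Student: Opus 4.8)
The plan is to reduce the almost-sure convergence \eqref{random-convergence} to a pathwise application of the contraction Theorem~\ref{t-MT}, treating each realisation of the random force as a deterministic force and controlling its size on unit time-windows. First I would fix a trajectory of~$h$ in the full-measure event on which $h(\cdot,\omega)\in C(\R_+,H^2)$, the mean-value condition~\eqref{mean-value} holds for $t\ge0$, and the random variable~$K$ from~\eqref{h-bound} is finite; on this event I argue deterministically. As in the proof of Theorem~\ref{t-MT}, the difference $w=u-v$ solves the linear equation~\eqref{parabolic-pde} with drift $a(t,x)=\int_0^1 f'(v+\tau w)\,\dd\tau$ and has zero mean value, so by Proposition~\ref{p-L1nonexp} the sequence $|w(n)|_1$ is non-increasing in $n\in\N$; it therefore suffices to show it tends to zero.

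The key step is a per-window contraction. On each interval $J=[n,n+1]$, Theorem~\ref{t-damping} and Corollary~\ref{c-H2bound} give a uniform $H^2$-bound on both solutions for $t\ge1$, with the constant depending on $\sup_{n\le s\le n+1}\|h(s)\|_2$; hence there is a radius $R_n$, controlled by $\max_{n\le s\le n+1}\|h(s)\|_2$, such that $u(n),v(n)\in B_{H^1}(R_n)$ and $|h|_{L^\infty(Q)}$ on that window is at most $R_n$. Applying Theorem~\ref{t-MT} on $[n,n+1]$ with parameters $R=R_n$ and $T=1$ yields a contraction factor $q_n=q(R_n)\in(0,1)$ with
\begin{equation*}
|w(n+1)|_1\le q_n\,|w(n)|_1,
\end{equation*}
so that $|w(N)|_1\le\bigl(\prod_{n=1}^{N-1}q_n\bigr)|w(1)|_1$. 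Since $\prod q_n\to0$ exactly when $\sum_n(1-q_n)=\infty$, the whole argument comes down to showing that the radii $R_n$ do not grow so fast that the contraction factors $q_n$ degenerate to~$1$ too quickly along the trajectory.

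The main obstacle — and the only place the probabilistic hypothesis enters — is controlling how $q_n\to1$ as $R_n\to\infty$ and then using the Cesàro-type bound~\eqref{h-bound} to guarantee $\sum_n(1-q_n)=\infty$. I would first make the dependence of~$q$ on~$R$ quantitative by tracing it through the Harnack constant~$\theta(T',T,\nu,\rho)$ in Proposition~\ref{p-harnack}, where $\rho$ is the bound on $|a|_{L^\infty}+|\p_x a|_{L^\infty}$ and hence is polynomially controlled by $R_n$; the standard Harnack constant degenerates at worst like $\theta(R)\ge\exp(-c\,\rho^{\alpha})$ for some $\alpha>0$, giving $1-q_n\ge c'\exp(-c\,R_n^{\alpha})$. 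Since~\eqref{h-bound} only bounds the \emph{time-average} of $\max_{t\le s\le t+1}\|h(s)\|_2$, the radii $R_n$ are on average bounded but may have rare large excursions; the finiteness of~$K$ forces $\frac1N\sum_{n\le N}R_n$ to stay bounded, whence $R_n=o(N)$ and in fact $R_n$ cannot exceed $o(\log n)$ along a positive-density subsequence, on which $1-q_n$ is bounded below — this is enough to make $\sum_n(1-q_n)$ diverge. Making this last density argument rigorous, i.e. converting the averaged bound~\eqref{h-bound} into divergence of $\sum(1-q_n)$ despite the exponential degeneracy of~$q_n$ in~$R_n$, is the delicate point; a clean way is to show that the contraction accumulated over any window where the running average of $\|h\|_2$ stays bounded is itself bounded away from~$1$, and that by~\eqref{h-bound} such windows occupy a set of full asymptotic density. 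Passing from $|w(N)|_1\to0$ along integers to the continuous limit in~\eqref{random-convergence} is then immediate from the monotonicity supplied by Proposition~\ref{p-L1nonexp}.
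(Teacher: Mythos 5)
Your overall skeleton (pathwise reduction, unit time-windows, Theorem~\ref{t-MT} on each window, Proposition~\ref{p-L1nonexp} to pass from integer times to all $t$) is the same as the paper's, but the way you close the argument has a genuine gap --- one you yourself flag as ``the delicate point''. Your route forces you to contract on \emph{every} window, and therefore to control how $q_n\to1$ as $R_n\to\infty$; for that you invoke a quantitative lower bound $\theta(\rho)\ge\exp(-c\rho^{\alpha})$ on the Harnack constant. Proposition~\ref{p-harnack} (Krylov--Safonov) provides no rate whatsoever, the paper never establishes one, and your entire divergence claim $\sum_n(1-q_n)=\infty$ rests on this unproven assertion. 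The subsequent averaging step is also garbled: what the boundedness of $\frac1N\sum_{n\le N}R_n$ actually gives, by a Chebyshev count, is that $R_n$ is bounded by a \emph{fixed constant} on a set of positive density --- not that ``$R_n$ cannot exceed $o(\log n)$ along a positive-density subsequence'', and certainly not that good windows have ``full asymptotic density'' (windows where the local maximum of $\|h(s)\|_2$ exceeds $2K+1$ can perfectly well have positive density; \eqref{h-bound} only caps that density below $1/2$).

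The missing observation is that once you have a constant bound on $R_n$ along an infinite set of windows, no quantitative Harnack estimate is needed at all: on those windows Theorem~\ref{t-MT} gives one and the same factor $q<1$, and on all remaining windows you simply use $q_n\le1$, i.e.\ non-expansion from Proposition~\ref{p-L1nonexp}; then $|w|_1$ decays geometrically in the number of good windows traversed. This is precisely the paper's proof: from \eqref{h-bound} one extracts a random sequence $t_k\to\infty$ with $\sup_{t_k\le t\le t_k+2}\|h(t)\|_2\le 2K+1$; on $[t_k,t_k+2]$ Corollary~\ref{c-H2bound} --- whose force here comes from Theorem~\ref{t-damping}, i.e.\ the Kruzhkov maximum principle under~\eqref{lower-bound}, which makes the $H^2$ bound at time $t_k+1$ \emph{independent of the size of the solutions at time $t_k$} --- yields a single random constant, hence Theorem~\ref{t-MT} on $[t_k+1,t_k+2]$ yields a single random $q\in(0,1)$ and $|u(t_k+2)-v(t_k+2)|_1\le q^k|u_0-v_0|_1$. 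You do have this history-independence ingredient (your $R_n$ ``controlled by $\max_{n\le s\le n+1}\|h(s)\|_2$'', modulo the indexing slip that a bound at time $n$ can only depend on $h$ \emph{before} time $n$, so the window must be shifted), but by insisting on a contraction in every window you maneuvered yourself into needing the unavailable quantitative Harnack bound; dropping that requirement both closes the gap and collapses your argument onto the paper's.
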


\begin{remark}
Theorem~\ref{t-stochastic} does not  apply to the case when the external force in~\eqref{burgers} has the form $h+\eta$, where $h=h(x)$ is a deterministic function, and $\eta=\eta(t,x)$ is a spatially regular white noise. However, combining dissipativity of the dynamics with the Markov property, it is not difficult to construct an increasing sequence of stopping times $(t_k)$ such that the difference $t_k-t_{k-1}$ has a finite exponential moment, and the solutions are bounded in~$H^2$ on $[t_k,t_k+1]$. Repeating then the argument used in the proof of Theorem~\ref{t-stochastic}, one can establish the exponential convergence in~\eqref{random-convergence} and the uniqueness of a stationary measure. Since the approach described above is well known and is carried out in detail in the paper~\cite[Section~3]{DR-2022}, we skip the corresponding argument. 
\end{remark}

\begin{proof}[Proof of Theorem~\ref{t-stochastic}]
	Let us fix two initial conditions $u_0,v_0\in B_{H^1}(R)$ with the same mean value and denote by~$u$ and~$v$ the corresponding solutions. In view of Theorem~\ref{t-WP}, they are well defined and belong to~$\XX$ with probability~$1$. It follows from~\eqref{h-bound} that there is an increasing random sequence $(t_k)$ going to~$+\infty$ with probability~$1$ such that 
$$
\sup_{t_k\le t\le t_k+2}\|h(t)\|_2\le 2K+1. 
$$
	Applying Corollary~\ref{c-H2bound} to the solutions~$u$ and~$v$ on the intervals $[t_k,t_k+2]$, we find an almost surely finite random constant $C>0$ such that 
	$$
	\sup_{t_k+1\le t\le t_k+2}(\|u(t)\|_2+\|v(t)\|_2)\le C. 
	$$
	Using now Theorem~\ref{t-MT}, we can construct a random variable $q\in(0,1)$ such that $|u(t_k+2)-v(t_k+2)|_1\le q\,|u(t_k+1)-v(t_k+1)|_1$ for any $k\ge1$. Combining this with~\eqref{L1-nonexp}, we conclude that, with probability~$1$, 
	$$
	|u(t_k+2)-v(t_k+2)|_1\le q^k|u_0-v_0|_1 \quad \mbox{for all $k\ge1$}. 
	$$
	Using again~\eqref{L1-nonexp}, we see  that~\eqref{random-convergence} holds almost surely. 
\end{proof}

\begin{remark}
Theorem~\ref{t-stochastic} does not specify the rate of convergence in~\eqref{random-convergence}, and it is unlikely that one can say something more about~\eqref{random-convergence} without any additional information. On the other hand, if~$h$ is a mixing stationary process such that~\eqref{M-finite} holds, then the random variable~$K$ in~\eqref{h-bound} is almost surely equal to~$M$ (by the Birkhoff theorem). In this case, any information about the rate of mixing will provide some quantitative estimates for the random times $(t_k)$ used in the above proof, and this allows to describe the rate of convergence in~\eqref{random-convergence}. 
\end{remark}

\addcontentsline{toc}{section}{Bibliography}

\def\cprime{$'$} \def\cprime{$'$}
  \def\polhk#1{\setbox0=\hbox{#1}{\ooalign{\hidewidth
  \lower1.5ex\hbox{`}\hidewidth\crcr\unhbox0}}}
  \def\polhk#1{\setbox0=\hbox{#1}{\ooalign{\hidewidth
  \lower1.5ex\hbox{`}\hidewidth\crcr\unhbox0}}}
  \def\polhk#1{\setbox0=\hbox{#1}{\ooalign{\hidewidth
  \lower1.5ex\hbox{`}\hidewidth\crcr\unhbox0}}} \def\cprime{$'$}
  \def\polhk#1{\setbox0=\hbox{#1}{\ooalign{\hidewidth
  \lower1.5ex\hbox{`}\hidewidth\crcr\unhbox0}}} \def\cprime{$'$}
  \def\cprime{$'$} \def\cprime{$'$} \def\cprime{$'$}
\providecommand{\bysame}{\leavevmode\hbox to3em{\hrulefill}\thinspace}
\providecommand{\MR}{\relax\ifhmode\unskip\space\fi MR }
\providecommand{\MRhref}[2]{%
  \href{http://www.ams.org/mathscinet-getitem?mr=#1}{#2}
}
\providecommand{\href}[2]{#2}

\end{document}